\documentclass[12pt]{article}
\usepackage{fullpage, amsmath, amssymb, amsfonts, amsthm, stmaryrd, url, hyperref}

\newtheorem{theorem}{Theorem}[section]
\newtheorem{lemma}[theorem]{Lemma}

\newtheorem{prop}[theorem]{Proposition}
\theoremstyle{definition}
\newtheorem{defn}[theorem]{Definition}
\newtheorem{hypothesis}[theorem]{Hypothesis}
\newtheorem{example}[theorem]{Example}
\newtheorem{remark}[theorem]{Remark}
\newtheorem{question}[theorem]{Question}

\numberwithin{equation}{theorem}

\newcommand{\FF}{\mathbb{F}}
\newcommand{\Fp}{\mathbb{F}_p}

\newcommand{\RR}{\mathbb{R}}
\newcommand{\ZZ}{\mathbb{Z}}
\newcommand{\Zp}{\mathbb{Z}_p}
\newcommand{\calC}{\mathcal{C}}

\newcommand{\frako}{\mathfrak{o}}

\DeclareMathOperator{\an}{an}
\DeclareMathOperator{\Cont}{Cont}
\DeclareMathOperator{\cont}{cont}

\DeclareMathOperator{\Map}{Map}

\DeclareMathOperator{\SL}{SL}
\DeclareMathOperator{\SO}{SO}
\DeclareMathOperator{\Sp}{Sp}

\title{The Hochschild-Serre property for some $p$-adic analytic group actions}
\author{Kiran S. Kedlaya}
\date{June 11, 2015}

\begin{document}

\maketitle

\begin{abstract}
Let $H \subseteq G$ be an inclusion of $p$-adic Lie groups. When $H$ is normal or even subnormal in $G$, the Hochschild-Serre spectral sequence implies that any continuous $G$-module whose $H$-cohomology vanishes in all degrees also has vanishing $G$-cohomology. With an eye towards applications in $p$-adic Hodge theory, we 
extend this to some cases where $H$ is not subnormal, assuming that the $G$-action is analytic in the sense of Lazard.
\end{abstract}

\section{Introduction}

Let $H \subseteq G$ be an inclusion of groups and let $M$ be a $G$-module. If $H$ is normal, then the \emph{Hochschild-Serre spectral sequence} \cite{hochschild-serre}
has the form
\begin{equation} \label{eq:hs spectral sequence}
E_2^{p,q} = H^p(G/H, H^q(H, M)) \Longrightarrow H^{p+q}(G,M).
\end{equation}
(This is sometimes also called the \emph{Lyndon spectral sequence} in recognition of a similar prior result \cite{lyndon} which did not explicitly exhibit the spectral sequence.)
If $H$ is not normal, one can still ask to what extent the $G$-cohomology of $M$ is determined by the $H$-cohomology. In particular, one can ask whether 
for any morphism $M \to N$ of $G$-modules such that $H^i(H,M) \to H^i(H,N)$ is an isomorphism for all $i \geq 0$, $H^i(G,M) \to H^i(G,N)$ is also an isomorphism for all $i \geq 0$; in this case, we say that the inclusion $H \subseteq G$ of groups has the \emph{HS (Hochschild-Serre) property}.
Thanks to \eqref{eq:hs spectral sequence}, the HS property holds  when $H$ is \emph{subnormal} in $G$, i.e., there exists a finite sequence $H = H_0 \subset H_1 \subset \cdots \subset H_m = G$ in which each inclusion $H_i \subset H_{i+1}$ is normal. 
On the other hand, it is not difficult to produce examples of inclusions of finite groups for which the HS property fails; see for instance Example~\ref{exa:no HS}.

One can also exhibit an analogue of the Hochschild-Serre spectral sequence for normal inclusions of topological groups, which again implies the HS property for subnormal inclusions; see \cite{flach}. The main result of this paper (Theorem~\ref{T:kill analytic cohomology}) is a restricted analogue of the HS property for certain non-subnormal inclusions of $p$-adic Lie groups, which applies only to the category of topological modules which are of characteristic $p$ and \emph{analytic} in the sense of Lazard \cite{lazard-groups}. It is crucial that the cohomology groups of such modules can be computed using either continuous or analytic cochains; this makes it possible to quantify the statement that an analytic group action of a $p$-adic Lie group is ``approximately abelian.''

We illustrate this theorem with some examples which arise from $p$-adic Hodge theory.
To be precise, these examples come from upcoming joint work with 
Liu \cite{kedlaya-liu2} on generalizations of the Cherbonnier-Colmez theorem on descent of $(\varphi, \Gamma)$-modules \cite{cherbonnier-colmez}, in the style of our new approach to the original theorem of Cherbonnier and Colmez \cite{kedlaya-new-phigamma}.

\section{The HS property for discrete groups}

For context, we begin with some remarks on the HS property for discrete groups.

\begin{defn}
For $G$ a group and $M$ a $G$-module, we say that $M$ has \emph{totally trivial $G$-cohomology} if $H^i(G,M) = 0$ for all $i \geq 0$.
Note that for given $G,H,\calC$, the HS property can be formulated as the statement that
any $M \in \calC$ with totally trivial $H$-cohomology also has totally trivial $G$-cohomology.
\end{defn}

\begin{remark} \label{R:no triviality descent}
If $H \subset G$ is a proper inclusion of groups and $M$ is a $G$-module with totally trivial $G$-cohomology, $M$ need not have totally trivial $H$-cohomology.
\end{remark}

\begin{prop} \label{P:totally trivial}
Let $G$ be a finite $p$-group and let $M$ be a $G$-module. The following conditions are equivalent.
\begin{enumerate}
\item[(a)]
The $G$-module $M$ has totally trivial $G$-cohomology.
\item[(b)]
The group $M$ is uniquely $p$-divisible (i.e., is a module over $\ZZ[p^{-1}]$)
and $H^0(G,M) = 0$.
\end{enumerate}
\end{prop}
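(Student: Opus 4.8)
The plan is to treat the two implications separately: the direction $(b)\Rightarrow(a)$ is essentially formal, while for $(a)\Rightarrow(b)$ the only real work is an elementary fact about modules over $\Fp[G]$. That fact, which I would establish first, is: if $V$ is a nonzero $\Fp[G]$-module (no finiteness hypothesis needed), then $V^G \neq 0$; equivalently, an $\Fp[G]$-module with vanishing $G$-invariants is zero. To see it, choose $0 \neq v \in V$; then $\Fp[G]v$ is a nonzero \emph{finite} $G$-set, being a nonzero finite-dimensional $\Fp$-vector space, so the class equation, together with the fact that every nontrivial $G$-orbit in $\Fp[G]v$ has size dividing $|G|$ and hence divisible by $p$, forces $|(\Fp[G]v)^G|$ to be divisible by $p$; as this fixed set contains $0$, it also contains a nonzero vector, which lies in $V^G$. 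This is the only place where the $p$-group hypothesis is used in an essential way.

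For $(b) \Rightarrow (a)$: if $M$ is uniquely $p$-divisible, then multiplication by $p$ is an automorphism of $M$, hence --- by functoriality and additivity of $H^i(G,-)$ --- an automorphism of each $H^i(G,M)$, so every $H^i(G,M)$ is uniquely $p$-divisible. On the other hand, for $i \geq 1$ the group $H^i(G,M)$ is annihilated by $|G|$, a power of $p$, by the standard restriction--corestriction argument. A group that is simultaneously uniquely $p$-divisible and killed by a power of $p$ is trivial, so $H^i(G,M) = 0$ for all $i \geq 1$; combined with the hypothesis $H^0(G,M) = 0$, this is exactly the statement that $M$ has totally trivial $G$-cohomology.

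For $(a) \Rightarrow (b)$: the equality $H^0(G,M) = M^G = 0$ is part of (a), since ``totally trivial'' includes degree $0$, so it remains only to show that multiplication by $p$ on $M$ is bijective. It is injective: $M[p]$ is a $G$-submodule killed by $p$, hence an $\Fp[G]$-module, and $M[p]^G \subseteq M^G = 0$, so $M[p] = 0$ by the lemma above; an easy induction on $k$ then gives $M[p^k] = 0$ for all $k$, so $M$ is $p$-torsion-free. It is surjective: injectivity provides a short exact sequence $0 \to M \xrightarrow{p} M \to M/pM \to 0$ of $G$-modules, and the low-degree terms of the associated long exact cohomology sequence, together with $H^0(G,M) = H^1(G,M) = 0$, force $(M/pM)^G = 0$; since $M/pM$ is an $\Fp[G]$-module, the lemma gives $M/pM = 0$, i.e.\ $M = pM$. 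Hence $M$ is uniquely $p$-divisible, and (b) holds.

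I do not expect a genuine obstacle here: once the $\Fp[G]$-module lemma is in hand, everything else is bookkeeping with the multiplication-by-$p$ sequence and the standard exponent bound on positive-degree group cohomology. The one point worth stating with care is the conventional one noted above --- that the hypothesis in (a) already supplies $M^G = 0$ --- so that in proving $(a)\Rightarrow(b)$ one never has to recover the degree-$0$ vanishing separately.
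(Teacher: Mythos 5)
Your proof is correct and follows essentially the same route as the paper: both directions hinge on the exponent bound $|G|\cdot H^i(G,M)=0$ for $i\geq 1$ together with the multiplication-by-$p$ exact sequence, applying the $p$-group fixed-point fact to $M[p]$ and to $M/pM$. The only difference is cosmetic: you state and prove the fixed-point lemma for $\Fp[G]$-modules explicitly via the class equation, whereas the paper leaves that standard fact implicit.
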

\begin{proof}
For $i>0$, $H^i(G,M)$ is a torsion group killed by the order of $G$ \cite[\S 2.4, Proposition~9]{serre-cohomology}; hence (b) implies (a). Conversely, the $p$-torsion subgroup $M[p]$ of $M$ has the property that $H^0(G, M[p]) = M[p]$ injects into $H^0(G,M)$. Consequently, if $M$ has totally trivial $G$-cohomology, then on one hand multiplication by $p$ is injective on $M$; on the other hand, the same is then true for $pM$ (which is isomorphic to $M$ as a $G$-module) and $M/pM$ (by the long exact sequence in cohomology), but the latter forces $M/pM = 0$. Hence (a) implies (b).
\end{proof}

\begin{remark} \label{R:p-group}
Proposition~\ref{P:totally trivial} implies the HS property for inclusions of finite $p$-groups, although this is already clear because such inclusions are always subnormal.
An immediate corollary is that if $H$ is a subgroup of a normal subgroup $P$ of $G$ which is a finite $p$-group, then $H \subseteq G$ has the HS property.
\end{remark}

\begin{example}[Serre] \label{exa:Steinberg}
For $G$ a semisimple algebraic group over $\FF_q$ and $P$ a $p$-Sylow subgroup, the Steinberg representation of $G$ restricts to a free $\FF_q[P]$-module and thus has totally trivial $G$-cohomology.
\end{example}

Here are some examples to show that the HS property does not always hold. We start with a minimal example.
\begin{example}[Naumann] \label{exa:no HS1}
Put $G = S_3$, let $H$ be the subgroup generated by a transposition, and take $M = \FF_3$ with the action of $G$ being given by the sign character. It is apparent that $M$ has vanishing $H$-cohomology. 
On the other hand, the groups
$H^i(A_3, M)$ are all $\FF_3$-vector spaces and are hence $H$-acyclic,
so \eqref{eq:hs spectral sequence} yields $H^1(S_3, M) = H^1(A_3, M) = \FF_3$.
Explicitly, a nonzero class is represented by the crossed homomorphism
taking one element of order 3 to $+1$ and the other to $-1$, mapping the other elements to 0.
\end{example}

A similar example exists in any odd characteristic $p$ using the dihedral group of order $2p$. For an example in characteristic 2, we offer the following.
\begin{example}[Serre] \label{exa:no HS}
Let $M'$ be a 5-dimensional vector space over $\FF_2$ equipped with a nondegenerate quadratic form $q$. The associated bilinear form $b$ has rank 4; let $K$ be its kernel and put $M = M'/K$. The action of $G = \SO(M',q)$ ($\cong S_6$) preserves $K$ and the induced action on $M$ defines an isomorphism $\SO(M',q) \cong \Sp(M, b) \cong \Sp_4(\FF_2)$. The exact sequence
\[
0 \to K \to M' \to M \to 0
\]
of $G$-modules does not split, so $H^1(G, M)$ is nonzero.

Now split $M$ as a direct sum $M_1 \oplus M_2$ of nonisotropic subspaces and put
$H_i = \SL(M_i)$ and $H = H_1 \times H_2$ ($\cong S_3 \times S_3$). As in Example~\ref{exa:Steinberg}, $M_1$ has no nonzero $H_1$-invariants and restricts to a free module over $\FF_2[P_1]$ for $P_1$ a $2$-Sylow subgroup of $H_1$; it follows that $M_1$ has totally trivial $H_1$-cohomology, hence also totally trivial $H$-cohomology by \eqref{eq:hs spectral sequence}.
Similarly, $M_2$ has totally trivial $H$-cohomology, as then does $M$. We conclude that the inclusion $H \subseteq G$ does not have the HS property.
\end{example}

\section{Analytic group actions}
\label{sec:analytic}

We now introduce the class of group actions to which our main result applies. The basic setup is taken from the work of Lazard \cite{lazard-groups}.

\begin{hypothesis}
Throughout \S\ref{sec:analytic}, let $\Gamma$ be a \emph{profinite $p$-analytic group} in the sense of \cite[III.3.2.2]{lazard-groups}. For example, we may take $\Gamma$ to be a compact $p$-adic Lie group.
\end{hypothesis}

\begin{defn}
For $M$ a $\Gamma$-module, let $C^{\cdot}(\Gamma,M)$ be the complex of inhomogeneous cochains, so that $C^n(\Gamma,M) = \Map(\Gamma^n, M)$ and for $h \in C^n(\Gamma,M)$ and $\gamma_0,\dots,\gamma_n \in \Gamma$,
\begin{align*}
(dh)(\gamma_0, \dots, \gamma_n) &= \gamma_0 h(\gamma_1,\dots,\gamma_n) \\
&+ \sum_{i=1}^n (-1)^i h(\gamma_0,\dots, \gamma_{i-2}, \gamma_{i-1} \gamma_i, \gamma_{i+1},\dots,\gamma_n) \\
&+ (-1)^{n+1} h(\gamma_0,\dots,\gamma_{n-1}).
\end{align*}
For $M$ a topological $\Gamma$-module, let $C^\cdot_{\cont}(\Gamma,M)$ be the subcomplex
of $C^\cdot(\Gamma, M)$ consisting of continuous cochains,
so that $C^n_{\cont}(\Gamma,M) = \Cont(\Gamma^n, M)$.
Let $H^{\cdot}_{\cont}(\Gamma,M)$ be the cohomology groups of $C^{\cdot}_{\cont}(\Gamma, M)$, topologized as subquotients for the compact-open topology; for a more intrinsic interpretation of these groups, see \cite[Proposition~9.4]{flach}. 
\end{defn}

For normal subgroups of $\Gamma$, we again have a Hochschild-Serre spectral sequence.
\begin{lemma} \label{L:hochschild-serre}
For any closed normal subgroup $\Gamma'$ of $\Gamma$ and any topological $\Gamma$-module $M$, there is a spectral sequence
\[
E_2^{p,q} = H^p_{\cont}(\Gamma/\Gamma', H^q_{\cont}(\Gamma', 
M))
\Longrightarrow H^{p+q}_{\cont}(\Gamma, M).
\]
\end{lemma}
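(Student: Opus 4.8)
The plan is to obtain the spectral sequence as a Grothendieck spectral sequence for the composite of the invariants functors $(-)^{\Gamma'}$ and $(-)^{\Gamma/\Gamma'}$, carried out inside the derived-functor framework for continuous cohomology of topological groups developed in \cite{flach}; routing through Flach has the advantage of confining the one genuinely delicate point --- that the complex of continuous cochains is not acyclic and so does not obviously commute with passage to cohomology --- to a single lemma about coinduced modules. Throughout one uses that $\Gamma$, being profinite, is compact, so that the projection $\Gamma \to \Gamma/\Gamma'$ admits a continuous set-theoretic section.

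In detail, I would proceed as follows. First, recall from \cite{flach} that $H^\bullet_\cont(\Gamma, -)$ is the derived functor of $(-)^\Gamma$ on a suitable abelian category of topological $\Gamma$-modules with enough injectives --- this is the content behind \cite[Proposition~9.4]{flach}, whose hypotheses hold since $\Gamma$ is profinite. Second, factor $(-)^\Gamma$ as $M \mapsto M^{\Gamma'} \mapsto (M^{\Gamma'})^{\Gamma/\Gamma'}$, noting that $M^{\Gamma'}$ is naturally a topological $\Gamma/\Gamma'$-module. Third, check the hypothesis of Grothendieck's theorem, namely that $(-)^{\Gamma'}$ sends injectives to $(-)^{\Gamma/\Gamma'}$-acyclics; this reduces by a Shapiro-type argument to the acyclicity of modules coinduced from $\Gamma'$ to $\Gamma$, and here the continuous section is used both to identify such a coinduced module concretely and to supply the continuous contracting homotopies needed to make the Shapiro isomorphism an isomorphism of topological groups. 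Fourth, assemble the spectral sequence, with $E_2^{p,q} = (R^p(-)^{\Gamma/\Gamma'})\bigl((R^q(-)^{\Gamma'})(M)\bigr)$, and identify $(R^q(-)^{\Gamma'})(M)$ with $H^q_\cont(\Gamma', M)$ as a topological $\Gamma/\Gamma'$-module --- again via the comparison of the first step, applied functorially --- and $R^p(-)^{\Gamma/\Gamma'}$ with $H^p_\cont(\Gamma/\Gamma', -)$, obtaining the stated form. Convergence is automatic since in each total degree the relevant filtration is finite.

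An equivalent, more concrete route mimics the classical double-complex argument of \cite{hochschild-serre}: filter $C^\bullet_\cont(\Gamma, M)$ by the number of arguments on which a homogeneous cochain is required to depend only through $\Gamma/\Gamma'$, so that $\mathrm{gr}^p C^{p+q}_\cont(\Gamma, M) \cong C^p_\cont(\Gamma/\Gamma', C^q_\cont(\Gamma', M))$ --- the continuous section making this a topological isomorphism via the homeomorphism $\Gamma^{n} \cong ((\Gamma/\Gamma') \times \Gamma')^{n}$ --- take $q$-cohomology with respect to the $\Gamma'$-cochain differential to reach the $E_1$-page, and continue.

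The step I expect to be the main obstacle is the third one: verifying in the topological category that $(-)^{\Gamma'}$ preserves injectivity up to $(-)^{\Gamma/\Gamma'}$-acyclicity, or equivalently, in the double-complex picture, that $C^p_\cont(\Gamma/\Gamma', -)$ may be commuted past the cohomology of the non-acyclic complex $C^\bullet_\cont(\Gamma', M)$. In both formulations the way out is the same --- pass to a resolution of $M$ by modules coinduced from $\Gamma'$, on which every functor in sight is exact, using the continuous section to keep all identifications continuous --- but this is precisely where the bulk of the work lies. The remaining points (boundedness of the filtration, continuity of the edge and differential maps, well-definedness of the $\Gamma/\Gamma'$-action on $H^q_\cont(\Gamma', M)$) are routine, and in any event are already available from the general theory of \cite{flach}.
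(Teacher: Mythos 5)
Your second, ``concrete'' route is in substance the paper's own proof: the paper simply observes that, $\Gamma$ and $\Gamma'$ being profinite, the surjection $\Gamma \to \Gamma/\Gamma'$ admits a continuous section, and that the explicit construction of the spectral sequence in \cite[\S 2]{hochschild-serre} then carries over without change to continuous cochains (with \cite[\S V.3.2]{lazard-groups} cited for further discussion). So to the extent that you fall back on that route, you reach the statement by the same path as the paper, up to the minor imprecision that in the Hochschild--Serre filtration the identification of the $E_1$-term with $C^p_{\cont}(\Gamma/\Gamma', H^q_{\cont}(\Gamma',M))$ is itself a short computation in \cite[\S 2]{hochschild-serre} rather than a literal description of the graded pieces of the filtered complex.

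Your primary route, however, has a genuine gap as written. The category of topological $\Gamma$-modules is not abelian (a continuous bijective morphism need not be a homeomorphism), so there is no ``suitable abelian category of topological $\Gamma$-modules with enough injectives'' on which $H^\bullet_{\cont}(\Gamma,-)$ is the derived functor of $(-)^\Gamma$; what \cite[Proposition~9.4]{flach} actually supplies is a comparison of continuous cochain cohomology with the cohomology of an associated object in a classifying topos of equivariant sheaves, and it is in that sheaf category, not among topological modules, that injectives and the Grothendieck machinery are available. Even granting that framework, the step you treat as a functorial consequence of the comparison --- identifying the inner derived functor $R^q(-)^{\Gamma'}$ applied to $M$ with $H^q_{\cont}(\Gamma',M)$ \emph{as a topological $\Gamma/\Gamma'$-module} --- is precisely the delicate point: $H^q_{\cont}(\Gamma',M)$ carries only a subquotient topology, need not be Hausdorff, and so need not even be an object of the category in which you would need it to sit, while the relevant higher direct image is a sheaf that is not simply the image of this topological group. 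This is exactly why the paper argues at the level of explicit continuous cochains, where the continuous section does all the work, and why its remark following the lemma is careful to interpret convergence either for bare abelian groups or via subquotient topologies and a filtration by subgroups that are not guaranteed to be closed.
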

For our purposes, convergence of the spectral sequence may be interpreted at the level of bare abelian groups, but it also makes sense at the level of topological groups: starting from $E_2$, each stage of the spectral sequence induces a subquotient topology on the subsequent stage, and $H^{p+q}_{\cont}(\Gamma, M)$ admits a filtration by subgroups (not guaranteed to be closed) whose subquotients are homeomorphic to the corresponding terms of $E_\infty$.
\begin{proof}
Since $\Gamma$ and $\Gamma'$ are profinite, the surjection of topological spaces $\Gamma \to \Gamma/\Gamma'$ admits a continuous section. Consequently, the explicit construction of the spectral sequence for finite groups given in \cite[\S 2]{hochschild-serre} carries over without change.
For further discussion, see \cite[\S V.3.2]{lazard-groups}.
\end{proof}

\begin{defn}
Let $A$ be the completion of the group ring $\Zp[\Gamma]$ with respect to the $p$-augmentation ideal $\ker(\Zp[\Gamma] \to \Fp)$. Put $I = \ker(A \to \Fp)$;
we view $A$ as a filtered ring using the $I$-adic filtration.
We also define the associated valuation:
for $x \in A$, let $w(A;x)$ be the supremum of those nonnegative integers $i$ for which $x \in I^i$.
\end{defn}

\begin{defn}
An \emph{analytic $\Gamma$-module} is a left $A$-module $M$ complete with respect to a valuation $w(M; \bullet)$ for which there exist $a>0, c \in \RR$ such that
\[
w(M; xy) \geq a w(A; x) + w(M; y) + c \qquad (x \in A, y \in M).
\]
Equivalently, there exist 
an open subgroup $\Gamma_0$ of $\Gamma$ and a constant $a > 0$ such that
\[
w(M; (\gamma-1)y) \geq w(M; y) + a \qquad (\gamma \in \Gamma_0, y \in M).
\]
\end{defn}

\begin{example} \label{exa:lazard finite}
Let $M$ be a torsion-free $\Zp$-module of finite rank on which $\Gamma$ acts continuously. Then $M$ is an analytic $A$-module for the valuation defined by any basis; see \cite[Proposition~V.2.3.6.1]{lazard-groups}.
\end{example}

\begin{defn}
Let $M$ be a continuous $\Gamma$-module. A cochain $\Gamma^i \to M$ is \emph{analytic} if
for every homeomorphism between an open subspace $U$ of $\Gamma^i$ and an open subspace $V$ of $\Zp^n$ for some nonnegative integer $n$, the induced function $V \to M$ is locally analytic (i.e., locally represented by a convergent power series expansion).
Let $C^i_{\an}(\Gamma,M) \subseteq C^i_{\cont}(\Gamma,M)$ be the space of analytic cochains. 

Suppose now that $M$ is an analytic $\Gamma$-module. Then by the proof of \cite[Proposition~V.2.3.6.3]{lazard-groups}, $C^i_{\an}(\Gamma,M)$ is a subcomplex of $C^i_{\cont}(\Gamma,M)$; we thus obtain \emph{analytic cohomology} groups $H^i_{\an}(\Gamma,M)$ and natural homomorphisms $H^i_{\an}(\Gamma,M) \to H^i_{\cont}(\Gamma,M)$.
\end{defn}

\begin{theorem}[Lazard] \label{T:lazard analytic}
If $M$ is an analytic $\Gamma$-module, then the inclusion $C^i_{\an}(\Gamma,M) \to C^i_{\cont}(\Gamma,M)$ is a quasi-isomorphism. That is, the continuous cohomology of $M$ can be computed using analytic cochains.
\end{theorem}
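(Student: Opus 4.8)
The plan is to reduce to the case where $\Gamma$ is a uniform pro-$p$ group, and then to compare $C^\bullet_{\an}(\Gamma,M)$ and $C^\bullet_{\cont}(\Gamma,M)$ against a common explicit finite model supplied by the structure theory of the completed group algebra $A$, with everything controlled by the filtration coming from $w(M;\bullet)$. For the reduction, fix an open normal uniform subgroup $\Gamma_0 \trianglelefteq \Gamma$, so that $Q := \Gamma/\Gamma_0$ is finite; the restriction of $M$ to $\Gamma_0$ is again an analytic module, since the analyticity inequality only needs to hold on some open subgroup. The continuous cochain complex carries the Hochschild-Serre filtration for the pair $(\Gamma_0 \trianglelefteq \Gamma)$ underlying Lemma~\ref{L:hochschild-serre}; one checks that this filtration restricts to the subcomplex $C^\bullet_{\an}(\Gamma,M)$ and that, because $Q$ is finite (so that analytic and continuous cochains on $Q$ coincide), the resulting ``analytic Hochschild-Serre'' spectral sequence has $E_2^{p,q} = H^p(Q, H^q_{\an}(\Gamma_0, M))$. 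Granting the theorem for $\Gamma_0$, the comparison of spectral sequences is then an isomorphism on $E_2$-pages, hence on abutments, which gives the theorem for $\Gamma$. So it suffices to treat uniform $\Gamma$; after one further application of the same reduction we may also assume the analyticity inequality $w(M;(\gamma-1)y)\ge w(M;y)+a$ holds for every $\gamma\in\Gamma$, with a fixed $a>0$.

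Let $\Gamma$ now be uniform of dimension $d$, with a minimal topological generating set $\gamma_1,\dots,\gamma_d$. The crucial structural input — and essentially the only place the $p$-analytic hypothesis is really used — is that $\operatorname{gr}_I A$ is a polynomial ring over $\Fp$. From this one obtains a finite free ``Koszul-type'' resolution of the trivial module $\Zp$ over $A$, of length $d$, with differentials built (up to lower-order corrections coming from commutators in $\Gamma$) from the operators $\gamma_i - 1$; applying $\operatorname{Hom}_A(-,M)$ yields a finite complex $K^\bullet$, with $K^j \cong M^{\binom{d}{j}}$, computing $H^\bullet_{\cont}(\Gamma,M)$. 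On the analytic side one constructs in parallel a finite model $L^\bullet$ with $L^j \cong M^{\binom{d}{j}}$ whose differentials are built from the $\log$-type operators attached to the $\gamma_i$ (which act on $M$ once $\Gamma$ is shrunk enough for the relevant series to converge as operators on $M$), together with Lie-bracket corrections; concretely $L^\bullet$ is a Chevalley-Eilenberg-type complex, and this step is where one uses that analytic cohomology, too, can be computed by such a complex. These two models can be arranged so that the inclusion $C^\bullet_{\an}(\Gamma,M)\hookrightarrow C^\bullet_{\cont}(\Gamma,M)$ corresponds to a morphism $L^\bullet \to K^\bullet$ (possibly through a zig-zag of quasi-isomorphisms).

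It then remains to see that $L^\bullet \to K^\bullet$ is a quasi-isomorphism. Filter $K^\bullet$ and $L^\bullet$ degreewise by the subgroups $F^\lambda M = \{m\in M : w(M;m)\ge\lambda\}$; this filtration is exhaustive and separated, $M$ is complete for it, and both differentials strictly raise it (by at least $a$). On the associated graded the two complexes coincide: the symbol of a $\log$-type operator on $\operatorname{gr} M$ equals that of the corresponding $\gamma_i - 1$, since the higher terms of the logarithm — and the Lie-bracket corrections, which are symbols of deeper commutators in the uniform group $\Gamma$ — all raise the filtration strictly more. Thus $L^\bullet \to K^\bullet$ is a morphism of bounded, exhaustively filtered, complete complexes inducing an isomorphism on associated gradeds, hence a quasi-isomorphism by a standard successive-approximation argument; combined with the two identifications above, this proves the theorem.

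The genuine obstacle is not this last comparison, which is essentially formal, but the two structural inputs it rests on: the theorem that $\operatorname{gr}_I A$ is a polynomial ring when $\Gamma$ is uniform (the source of the Koszul-type resolutions), and the construction of the analytic model $L^\bullet$ — equivalently, the fact that analytic cohomology is computed by a Chevalley-Eilenberg-type complex — compatibly with $K^\bullet$. Both require real care in low residue characteristic and when the analyticity constant $a$ is small, e.g.\ rescaling $w$ or shrinking $\Gamma$ so that the logarithm (and Campbell-Hausdorff) series make sense as operators on $M$ itself. All of this is developed in Chapter~V of \cite{lazard-groups}.
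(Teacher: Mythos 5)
Your overall architecture (finite free ``Koszul'' model for continuous cohomology, Chevalley--Eilenberg-type model for analytic cohomology, comparison via the $w$-filtration on $M$) is not the route the paper takes --- the paper simply observes that Lazard's proof of \cite[Th\'eor\`eme~V.2.3.10]{lazard-groups} uses only the estimate built into the definition of an analytic $\Gamma$-module and so applies verbatim; Lazard's argument for this particular comparison is a direct valuation/homotopy argument on cochain complexes, not a passage through Lie-algebra-type models. More importantly, your route has a genuine gap at its central step: the ``$\log$-type operators attached to the $\gamma_i$'' do not exist on the modules the theorem is actually about. The definition of analytic $\Gamma$-module allows (and the paper's main application, Theorem~\ref{T:kill analytic cohomology}, requires) $M$ to be a Banach space over a field of characteristic $p$, i.e.\ $pM=0$. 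The series $\log\gamma=\sum_{n\ge 1}(-1)^{n+1}(\gamma-1)^n/n$ involves division by $p$, which is not a convergence issue that shrinking $\Gamma$ or rescaling $w$ can repair --- it is undefined on a module killed by $p$. Correspondingly, Lazard's comparison of analytic cohomology with a Chevalley--Eilenberg complex (his Lie-algebra isomorphism) is established for $\mathbb{Q}_p$-coefficient situations and is not available, and is not claimed in \cite{lazard-groups}, for $p$-torsion analytic modules; so the construction of your complex $L^\bullet$, which you describe as one of the two structural inputs, cannot be outsourced to Chapter~V and in fact fails in the case of interest. (This is precisely why the paper's own homotopy in Theorem~\ref{T:kill analytic cohomology} is built from $(\eta^{p^m}-1)^{-1}=((\eta-1)^{p^m})^{-1}$ rather than from $p^m\log\eta$.)

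A secondary, also nontrivial, gap: you identify $H^\bullet_{\cont}(\Gamma,M)$ with the cohomology of $\operatorname{Hom}_A(K_\bullet,M)$ for a finite free $A$-resolution $K_\bullet$ of the trivial module. For $M$ finitely generated over $\Zp$ this is standard, but here $M$ is a Banach space over a nonarchimedean field, neither discrete nor profinite, and the comparison between the continuous bar complex and a finite algebraic resolution requires exactly the kind of topological/filtered homotopy control (continuity and boundedness of the comparison maps and homotopies, compatibility with completion) that constitutes the substance of Lazard's Chapter~V; as written this step assumes what is essentially the hard part. The final filtered-comparison paragraph is fine as formal homological algebra, but it rests on two inputs, one of which (the analytic model $L^\bullet$) does not exist in the generality the theorem demands.
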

\begin{proof}
In the context of Example~\ref{exa:lazard finite}, this is the statement of 
\cite[Th\'eor\`eme~V.2.3.10]{lazard-groups}. However, the proof of this statement only uses the stronger hypothesis in the proof of \cite[Proposition~V.2.3.6.1]{lazard-groups}, which we have built into the definition of an analytic $\Gamma$-module. The remainder of the proof of \cite[Th\'eor\`eme~V.2.3.10]{lazard-groups} thus carries over unchanged.
\end{proof}

\begin{remark}
In considering Theorem~\ref{T:lazard analytic}, it may help to consider the first the case of 1-cocycles: every 1-cocycle is cohomologous to a crossed homomorphism, which is analytic because of how it is determined by its action on topological generators.
\end{remark}

\section{The HS property for some analytic group actions}

We now establish our main result, which gives an analogue of the HS property for certain analytic group actions.

\begin{theorem} \label{T:kill analytic cohomology}
Let $\Gamma$ be a profinite $p$-analytic group.
Let $H$ be a pro-$p$ procyclic subgroup of $\Gamma$ (i.e., it is isomorphic to $\ZZ_p$). 
Let $M$ be a analytic $\Gamma$-module which is a Banach space over some nonarchimedean field of characteristic $p$ with a nontrivial absolute value.
(It is not necessary to require $\Gamma$ to act on this field.)
If $H^i_{\cont}(H, M) = 0$ for all $i \geq 0$,
then $H^i_{\cont}(\Gamma, M) = 0$ for all $i \geq 0$.
\end{theorem}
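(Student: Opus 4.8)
The plan is to reduce to an explicit model situation and then exploit the coincidence of continuous and analytic cohomology (Theorem~\ref{T:lazard analytic}) to make precise the idea that an analytic action of a $p$-adic Lie group is ``approximately abelian.''

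\emph{Reductions.} Since $H\cong\Zp$ has cohomological dimension $1$, the hypothesis is equivalent to the single assertion that $\gamma-1$ is a bijection on $M$ for $\gamma$ a topological generator of $H$; by the open mapping theorem over a complete nonarchimedean field, $(\gamma-1)^{-1}$ is then bounded. I would next shrink $\Gamma$. If $\Gamma_0\trianglelefteq\Gamma$ is open with $\Gamma/\Gamma_0$ a finite $p$-group, then Lemma~\ref{L:hochschild-serre} shows that the vanishing of $H^i_{\cont}(\Gamma_0,M)$ for all $i$ implies that of $H^i_{\cont}(\Gamma,M)$; moreover the hypothesis passes to $H\cap\Gamma_0=\overline{\langle\gamma^{p^n}\rangle}$, because $pM=0$ forces the $A$-action to factor through $A/pA$, in which $\gamma^{p^n}-1=(\gamma-1)^{p^n}$ thanks to the identity $(1+t)^{p}=1+t^{p}$ in characteristic $p$, so that $\gamma^{p^n}-1$ is again bijective. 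Iterating, I may assume $\Gamma$ is uniform, and then (after a further such shrinking, using the structure theory of uniform pro-$p$ groups to produce a $\Zp$-basis of the Lie algebra adapted to the line spanned by $\log H$) that $H=\overline{\langle\gamma_1\rangle}$ with $\gamma_1$ the first member of a minimal system $\gamma_1,\dots,\gamma_d$ of topological generators of $\Gamma$. Finally, by Theorem~\ref{T:lazard analytic} it suffices to show that the analytic cochain complex $C^\cdot_{\an}(\Gamma,M)$ is acyclic.

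\emph{The approximately abelian step.} Filter $A$ by powers of its augmentation ideal. The crucial input is Lazard's computation that for a uniform group $\mathrm{gr}\,A$ is a polynomial ring $\Fp[\bar p,\epsilon_1,\dots,\epsilon_d]$ on the symbols of $p$ and of $\gamma_1-1,\dots,\gamma_d-1$; in particular the commutators $[\gamma_i,\gamma_j]$ contribute nothing new, since for a uniform group $[\gamma_i,\gamma_j]-1$ has symbol a multiple of $\bar p$. Because $pM=0$ we may replace $A$ by $A/pA$, whose associated graded is the \emph{commutative} polynomial ring $\Fp[\epsilon_1,\dots,\epsilon_d]$ --- this is the precise sense in which, in characteristic $p$, the action becomes abelian to first order. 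Equipping $M$ with a valuation compatible with the $A$-action (the analytic-module inequality provides one, which I would modify so that $\gamma_1-1$ becomes \emph{strict}, i.e.\ shifts the valuation by a fixed amount, using the boundedness of $(\gamma_1-1)^{-1}$) and filtering $C^\cdot_{\an}(\Gamma,M)$ accordingly, its associated graded is identified with a Koszul complex $K(\epsilon_1,\dots,\epsilon_d;\mathrm{gr}\,M)$ computing the cohomology of the abelian group $\Zp^d$ acting on $\mathrm{gr}\,M$. On the graded level $\epsilon_1$ --- the symbol of the bijective strict operator $\gamma_1-1$ --- acts invertibly, so this complex is exact (the Koszul complex of a sequence one of whose members acts invertibly is contractible; equivalently, apply Lemma~\ref{L:hochschild-serre} to the split decomposition $\Zp^d=\overline{\langle\gamma_1\rangle}\times\Zp^{d-1}$, noting that $\gamma_1$ is central and $\gamma_1-1$ is invertible on every $H^q$ of the $\Zp^{d-1}$-factor).

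\emph{Conclusion and main obstacle.} Since $M$ is a Banach space, the filtration on $C^\cdot_{\an}(\Gamma,M)$ is complete, separated, and exhaustive, so the spectral sequence of the filtration converges; its $E_1$ page is the cohomology of the exact associated graded and hence vanishes, whence $H^i_{\an}(\Gamma,M)=0$ for all $i$, and Theorem~\ref{T:lazard analytic} yields the claim. I expect the real work to be the middle step: pinning down the filtrations so that (a) the associated graded of the group algebra really is the stated commutative polynomial ring --- i.e.\ extracting the exact form of the ``approximately abelian'' statement from Lazard's analysis of $p$-valued groups --- and (b) $\gamma_1-1$ is strict for the chosen filtration on $M$, so that its bijectivity upgrades to invertibility of $\epsilon_1$ on $\mathrm{gr}\,M$ and thus to exactness of the associated graded complex (this strictness need not be automatic, and arranging it, or else arguing convergence of the filtration spectral sequence directly despite a nonzero $E_1$, is the delicate point); and (c) the resulting filtration on the cochain complex is complete so that the spectral sequence genuinely converges to $H^\cdot_{\an}(\Gamma,M)$. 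It is in (b) and (c) that the two hypotheses on $M$ --- that it is a Banach space (completeness) and of characteristic $p$ (so that $\mathrm{gr}\,(A/pA)$ is commutative and $(1+t)^{p^n}=1+t^{p^n}$) --- enter in an essential way.
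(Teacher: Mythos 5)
Your reductions (bijectivity of $\eta-1$ with bounded inverse via the open mapping theorem, the characteristic-$p$ identity $\eta^{p^n}-1=(\eta-1)^{p^n}$, shrinking $\Gamma$ via Lemma~\ref{L:hochschild-serre}, and passing to analytic cochains via Theorem~\ref{T:lazard analytic}) match the paper. But the core of your argument --- filter everything, identify $\mathrm{gr}$ of the cochain complex with a Koszul complex over $\Fp[\epsilon_1,\dots,\epsilon_d]$, and kill it because $\epsilon_1$ acts invertibly on $\mathrm{gr}\,M$ --- has a genuine gap exactly at the point you flag as (b), and I do not think it can be repaired in the form you propose. For $\epsilon_1$ to be invertible on $\mathrm{gr}\,M$ you need $\eta-1$ to be \emph{strict} of constant filtration shift, i.e.\ $(\eta-1)\,\mathrm{Fil}^s M=\mathrm{Fil}^{s+a}M$; iterating, this forces $\left\| (\eta-1)^n \right\|$ and $\left\| (\eta-1)^{-n} \right\|$ to grow at reciprocal geometric rates, i.e.\ $\eta-1$ has a single ``slope.'' The hypothesis only gives bijectivity with a bounded inverse, and in general $\eta-1$ mixes several slopes: take $M=M_1\oplus M_2$ with $\eta-1$ scaling norms by $\left|\overline{\pi}\right|$ on $M_1$ and $\left|\overline{\pi}\right|^2$ on $M_2$ (easily realized by summing two modules of the shape in Example~\ref{exa:cyclotomic1}). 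Then no renorming compatible with the Banach structure makes $\eta-1$ strict, the symbol of $\eta-1$ on $\mathrm{gr}\,M$ is not injective on the graded piece coming from $M_2$, the $E_1$ page does not vanish, and your fallback of ``arguing convergence despite a nonzero $E_1$'' is not an argument. A secondary issue is (c): $C^\cdot_{\an}(\Gamma,M)$ is only an increasing union of Banach subcomplexes (the paper's $c$-analytic cochains $C^\cdot_{\an,c}$), so any completeness/convergence claim must be made after fixing such a subcomplex; your sketch filters the whole analytic complex, which is not complete. Also, even at the graded level, identifying the inhomogeneous cochain complex with a Koszul complex is itself a substantial step (essentially Lazard's comparison with Lie algebra cohomology), not a formality.

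The paper avoids the graded level entirely, and its mechanism is worth noting because it is precisely what handles the multi-slope phenomenon your approach stumbles on. Fixing $c\in(0,c_0]$ and working in the Banach space $C^n_{\an,c}(\Gamma,M)$, it defines for each $m\geq 0$ an explicit degree $-1$ map $h_m$ built from $(\eta^{p^m}-1)^{-1}=((\eta-1)^{-1})^{p^m}$, and computes $d\circ h_m+h_m\circ d-1$ exactly: the error consists of commutator-type terms. The point is quantitative: if $t$ is the operator norm of $(\eta-1)^{-1}$, then $(\eta^{p^m}-1)^{-1}$ has norm at most $t^{p^m}$ (single-exponential in $p^m$), while $c$-analyticity makes the commutator defects of size $c^{p^{2m}}$ and $c^{p^{m+1}}$ (the former double-exponential), so for $m$ large the error operator has norm $<1$; then $d\circ h_m+h_m\circ d$ is invertible and homotopic to zero, giving acyclicity of $C^\cdot_{\an,c}$ without ever needing $\eta-1$ to have one slope. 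In other words, the paper trades your structural statement about $\mathrm{gr}\,A$ for a perturbation-of-the-identity estimate in which the largeness of $t$ (the very thing that blocks strictness) is beaten by taking $m$ large. If you want to salvage a filtration/spectral-sequence proof, you would need a substitute for this step, and I do not see one.
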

\begin{proof}
Let $\eta$ be a topological generator of $H$.
The vanishing of $H^0_{\cont}(H,M)$ and $H^1_{\cont}(H,M)$ means that $\eta-1$ is a bijection on $M$;
by the Banach open mapping theorem \cite[\S I.3.3, Th\'eor\`eme~1]{bourbaki-evt}, $\eta-1$ admits a bounded inverse.
Since $M$ is of characteristic $p$, for each nonnegative integer $n$ the actions of $\eta^{p^n}-1$ and $(\eta-1)^{p^n}$ coincide; hence $\eta^{p^n}-1$ also has a bounded inverse.

We next make some reductions. Recall that $M$ has been assumed to be an analytic $\Gamma$-module. We may thus choose a pro-$p$-subgroup $\Gamma_0$ of $\Gamma$ on which the logarithm map defines a bijection with $\Zp^h$ for some $h$, such that for some $c_0 \in (0,1)$ we have
\[
\left| (\gamma-1)y \right| \leq c_0 \left| y \right| \qquad (\gamma \in \Gamma_0, y\in M).
\]
By the previous paragraph,
we may also assume $\eta \in \Gamma_0$. 
Using Lemma~\ref{L:hochschild-serre}, we may also assume $\Gamma = \Gamma_0$.
By Theorem~\ref{T:lazard analytic}, to check that $H^i_{\cont}(\Gamma,M) = 0$ it suffices to check that $H^i_{\an}(\Gamma,M) = 0$.

Let $\Gamma_n$ be the subgroup of $\Gamma_0$ which is the image of $p^n \Zp^h$ under the exponential map. For $c_0$ as above, we have
\begin{equation} \label{eq:exponential convergence}
\left| (\gamma-1)(y) \right| \leq c_0^{p^n} \left| y \right| \qquad
(n \geq 0, \gamma \in \Gamma_n, y \in M).
\end{equation}
For $c \in (0,c_0]$, we say that a cochain $f: \Gamma^n \to M$ is \emph{$c$-analytic}
if there exists $d>0$ such that
\begin{equation} \label{eq:analytic cochain}
\left| f(\gamma_1,\dots,\gamma_n) - f(\gamma_1 \eta_1, \dots,
\gamma_n \eta_n) \right| \leq d c^{p^{i_1 + \cdots + i_n}}
\quad (\gamma_1,\dots,\gamma_n \in \Gamma; i_1,\dots,i_n \geq 0; \eta_j \in \Gamma_{i_j}).
\end{equation}
Using the fact that $M$ is of characteristic $p$, one may check that any analytic cochain in the sense of Lazard is $c$-analytic for some $c>0$.
This means that $C^n_{\an}(\Gamma,M)$ can be written as the union of the subspaces
$C^n_{\an,c}(\Gamma,M)$ of $c$-analytic cochains over all $c \in (0, c_0]$. Moreover,
using \eqref{eq:exponential convergence} we see that $C^n_{\an,c}(\Gamma,M)$ is a subcomplex of $C^n_{\an}(\Gamma,M)$, so to prove the theorem it suffices to check the acyclicity of each $C^n_{\an,c}(\Gamma,M)$.

From now on, fix $c \in (0, c_0]$.
We define a norm on $C^n_{\an,c}(\Gamma,M)$ assigning to each cochain $f$ the minimum $d \geq 0$ for which \eqref{eq:analytic cochain} holds; note that $C^n_{\an,c}(\Gamma,M)$ is complete with respect to this norm.
For $m \geq 0$, we define a chain homotopy $h_m$ on $C^\cdot_{\an,c}(\Gamma,M)$ by the following formula: for $f_n \in C^n_{\an,c}(\Gamma,M)$,
\[
h_m(f_n)(\gamma_1,\dots,\gamma_{n-1})
= (\eta^{p^m}-1)^{-1} \sum_{i=1}^n (-1)^{i-1} 
f_n(\gamma_1,\dots,\gamma_{i-1},
\eta^{p^m}, \gamma_{i}, \dots, \gamma_{n-1}).
\]
We then compute that
\begin{align*}
&(d \circ h_m + h_m \circ d - 1)(f_n)(\gamma_1,\dots,\gamma_n) \\
&=
(\gamma_1 (\eta^{p^m}-1)^{-1} - (\eta^{p^m}-1)^{-1} \gamma_1)
\sum_{i=1}^{n} (-1)^{i-1} f_n(\gamma_2,\dots,\gamma_i,\eta^{p^m},
\gamma_{i+1},\dots,\gamma_n)\\
&-
\sum_{i=1}^n (\eta^{p^m}-1)^{-1}(f_n(\gamma_1,\dots,\gamma_{i-1}, \eta^{p^m} \gamma_i, \gamma_{i+1},\dots,\gamma_n) - f_n(\gamma_1,\dots,\gamma_{i-1},
\gamma_i \eta^{p^m}, \gamma_{i+1}, \dots, \gamma_n)).
\end{align*}
To bound the right side of this equality, write
\[
\gamma(\eta^{p^m}-1)^{-1} - (\eta^{p^m}-1)^{-1}\gamma
=(\eta^{p^m}-1)^{-1}(\eta^{p^m} \gamma)(1 - \gamma^{-1} \eta^{-p^m} \gamma \eta^{p^m})(\eta^{p^m}-1)^{-1}.
\]
Then note that if $\gamma_i \in \Gamma_j$, then $\eta^{p^m} \gamma_i$ and $\gamma_i \eta^{p^m}$ differ by an element of $\Gamma_{m+j+1}$.
Finally, let $t>0$ be the operator norm of the inverse of $\eta-1$ on $M$;
then $\eta^{p^m}-1$ has an inverse of operator norm at most $t^{p^m}$.
Fix $\epsilon \in (0,1)$; for $m$ sufficiently large, we have 
\[
\max\{t^{2p^{m}} c^{p^{2m}},
t^{p^m} c^{p^{m+1}}\} < 1-\epsilon.
\]
For such $m$, the map $d \circ h_m + h_m \circ d - 1$ acts on $C^n_{\an,c}(\Gamma,M)$ with operator norm at most $1-\epsilon$; consequently, there is an invertible map on $C_{\an,c}(\Gamma,M)$ which is homotopic to zero. This proves the claim.
\end{proof}

Note that Example~\ref{exa:no HS1} and Example~\ref{exa:no HS} show that Theorem~\ref{T:kill analytic cohomology}
cannot remain true if we drop the condition that $H$ be pro-$p$. However, it does not resolve the following question.
\begin{question}
Does Theorem~\ref{T:kill analytic cohomology}
 remain true if we drop the condition that $H$ be procyclic?
This does not follow from Theorem~\ref{T:kill analytic cohomology} because the hypothesis of the theorem is not preserved upon replacing $H$ with a subgroup
(Remark~\ref{R:no triviality descent}).
\end{question}

\section{Examples from $p$-adic Hodge theory}
\label{sec:examples}

We conclude with some examples of Theorem~\ref{T:kill analytic cohomology} which are germane to $p$-adic Hodge theory.

\begin{defn}
For any ring $R$ of characteristic $p$, let $\overline{\varphi}: R \to R$ denote the $p$-power Frobenius endomorphism.
\end{defn}

\begin{remark}
We will frequently use the ``Leibniz rule'' for group actions, in the form of the identity
\begin{equation}  \label{eq:Leibniz rule}
(\gamma-1)(\overline{x} \overline{y})
= (\gamma-1)(\overline{x}) \overline{y} + \gamma(\overline{x})
(\gamma-1)(\overline{y}).
\end{equation}
For instance, this holds if $\gamma$ acts on a ring containing $\overline{x}$ and $\overline{y}$, or if it acts compatibly on a ring containing $\overline{x}$ and a module containing $\overline{y}$ (or vice versa).
\end{remark}

\begin{prop} \label{P:affinoid action analytic}
Let $F$ be a complete discretely valued field of characteristic $p$.
Let $R$ be an affinoid algebra over $F$.
Let $M$ be a finitely generated $R$-module.
Let $\Gamma$ be a profinite $p$-analytic group acting compatibly on $F,R,M$,
and suppose that there is an open subgroup of $\Gamma$ fixing the residue field of $F$.
Then $M$ is an analytic $\Gamma$-module.
\end{prop}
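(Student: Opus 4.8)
The plan is to verify the second, equivalent criterion in the definition of an analytic $\Gamma$-module: I will exhibit an open subgroup $\Gamma_0 \subseteq \Gamma$ and a Banach norm $\left|\cdot\right|_M$ on $M$ inducing its canonical topology as a finite $R$-module, with $\left|(\gamma - 1)(m)\right|_M \leq \left|t\right|\left|m\right|_M$ for all $\gamma \in \Gamma_0$ and $m \in M$; here $t$ is a uniformizer of $F$, $\left|\cdot\right|$ is a multiplicative absolute value on $F$ with $\left|t\right| < 1$, and $v$ is the corresponding valuation. I would proceed in three stages --- for $F$, then $R$, then $M$ --- shrinking $\Gamma_0$ finitely often, which is harmless since $\Gamma$ is profinite, so a finite intersection of open neighbourhoods of the identity still contains an open subgroup.

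\emph{Stage 1 (the field).} Choosing a coefficient field (possible in equal characteristic), write $F = k((t))$ with $k$ the residue field. A continuous automorphism of $F$ preserves the subring $\frako_F = k[[t]]$ of power-bounded elements, hence is isometric, so $\gamma(t) = t u_\gamma$ with $u_\gamma \in \frako_F^\times$. On the given open subgroup fixing $k$, the map $\gamma \mapsto (u_\gamma \bmod \mathfrak{m}_F) \in k^\times$ is a continuous homomorphism, so it has finite image and open kernel; replacing the subgroup by this kernel, I get an open $\Gamma_0$ with $v(u_\gamma - 1) \geq 1$ for $\gamma \in \Gamma_0$. Expanding $x = \sum_i a_i t^i \in F$ with $a_i \in k$ and using $v((\gamma - 1)(a_i)) \geq 1$ (as $\gamma$ fixes the residue field) together with $v(\gamma(t)^i - t^i) = i + v(u_\gamma^i - 1) \geq i + v(u_\gamma - 1)$ (the last inequality uses characteristic $p$), one gets $v((\gamma - 1)(x)) \geq v(x) + 1$ for $\gamma \in \Gamma_0$. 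This is the only place the residue-field hypothesis enters, and it is essential: continuity alone gives that $\gamma - 1$ is small on any fixed element, not a contraction uniform in $x$.

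\emph{Stages 2 and 3 (the algebra and the module).} Write $R = F\langle x_1, \dots, x_n\rangle / I$ and let $r_i$ be the image of $x_i$, so $\left|r_i\right|_R \leq 1$ for the residue norm; let $R^+ = \{\left|\cdot\right|_R \leq 1\}$, the image of $\frako_F\langle x_1, \dots, x_n\rangle$. Scaling generators of $M$ over $R$ by powers of $t$, pick $m_1, \dots, m_q$ with $\left|m_j\right|_M \leq 1$ for the canonical quotient norm and set $M^+ = R^+ m_1 + \cdots + R^+ m_q$, which equals $\{\left|\cdot\right|_M \leq 1\}$ --- here one uses that, $F$ being discretely valued, the infima defining these norms are attained, so they take values in $\left|F^\times\right| \cup \{0\}$. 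Shrinking $\Gamma_0$ by continuity of the actions, arrange $\left|(\gamma - 1)(r_i)\right|_R \leq \left|t\right|$ and $\left|(\gamma - 1)(m_j)\right|_M \leq \left|t\right|$, so that $\gamma(r_i) \in R^+$ and $\gamma(m_j) \in M^+$. Now the Leibniz rule \eqref{eq:Leibniz rule} propagates the contraction from the generators to the whole lattice: writing $s = \sum_{\underline j} a_{\underline j}\, r_1^{j_1}\cdots r_n^{j_n} \in R^+$ (with $a_{\underline j} \in \frako_F$ tending to $0$), expand $(\gamma - 1)(s)$ into terms $(\gamma(a_{\underline j}) - a_{\underline j})\gamma(r_1^{j_1}\cdots r_n^{j_n})$, of norm $\leq \left|t\right|$ by Stage 1, and terms $a_{\underline j}(\gamma(r_1^{j_1}\cdots r_n^{j_n}) - r_1^{j_1}\cdots r_n^{j_n})$, of norm $\leq \left|t\right|$ by telescoping \eqref{eq:Leibniz rule} over the generators; summing (the terms tend to $0$) gives $(\gamma - 1)(R^+) \subseteq t R^+$. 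The same computation applied to $\sum_j g_j m_j \in M^+$ with $g_j \in R^+$, now invoking $(\gamma - 1)(R^+) \subseteq t R^+$ for the coefficient factors, gives $(\gamma - 1)(M^+) \subseteq t M^+$.

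\emph{Bootstrap and conclusion.} For $m \in M$, write $\left|m\right|_M = \left|t\right|^{-\ell}$ with $\ell \in \ZZ$ and set $m' = t^\ell m \in M^+$; since $\gamma(t^\ell) = t^\ell u_\gamma^\ell$,
\[
(\gamma - 1)(m) = t^{-\ell}\left( u_\gamma^{-\ell}(\gamma(m') - m') + (u_\gamma^{-\ell} - 1)m' \right),
\]
and as $\left|(\gamma - 1)(m')\right|_M \leq \left|t\right|$ while $v(u_\gamma^{-\ell} - 1) \geq v(u_\gamma - 1) \geq 1$, the right-hand side has norm $\leq \left|t\right|^{-\ell}\left|t\right| = \left|t\right|\left|m\right|_M$; the same argument works on $R$. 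This establishes the contraction criterion, so $M$ (and $R$) is an analytic $\Gamma$-module. The step I expect to need the most care is precisely this last one: because the action is only $F$-semilinear one cannot rescale by $t^\ell$ for free, so the semilinearity defect $u_\gamma^{-\ell} - 1$ must be absorbed --- which is exactly why Stage 1 is arranged to yield the slightly-more-than-continuity estimate $v(u_\gamma - 1) \geq 1$ --- while the remainder is routine bookkeeping with the lattices $R^+, M^+$ and the associated norms.
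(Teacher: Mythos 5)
Your proof is correct and takes essentially the same route as the paper's: treat $F$ first using the residue-field hypothesis, then propagate the contraction to $R$ via a Tate-algebra presentation and to $M$ via module generators using the Leibniz rule \eqref{eq:Leibniz rule}, shrinking the open subgroup by continuity at each stage. The differences are only technical --- you use unit-ball lattices (exploiting attainment of the quotient-norm infimum over a discretely valued field) plus a final bootstrap handling the semilinear unit $u_\gamma$, where the paper instead works with the uniform estimate $\left|(\gamma-1)f\right| \leq (c/2)\left|f\right|$ on all of $F$ and lifts $x$ to $y$ with $\left|y\right| \leq 2\left|x\right|$; if anything, your Stage 1 spells out a point the paper leaves terse.
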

\begin{proof}
Let $\frako_F$ be the valuation subring of $F$.
Let $\overline{\pi}$ be a uniformizer of $\frako_F$.
By hypothesis, there exists an open subgroup $\Gamma_0$ on $\Gamma$ fixing
$\frako_F/(\overline{\pi})$. Then for any $\gamma \in \Gamma_0$ and any positive integer $n$,
$\gamma^{p^n}$ fixes $\frako_F/(\overline{\pi}^{n+1})$, so $F$ itself is an analytic $\Gamma$-module.

By definition, $R$ is a quotient of the Tate algebra $F\{T_1,\dots,T_n\}$ for some nonnegative integer $n$. Equip $R$ with the quotient norm for some such presentation.
Let $r_i \in R$ be the image of $T_i$.
Since the action of $\Gamma$ on $R$ is continuous, 
for any $c>0$ there exists an open subgroup $\Gamma_0$ of $\Gamma$ such that
\[
\left|(\gamma-1)(f)\right| \leq \frac{c}{2} \left|f \right|, \qquad \left|(\gamma-1)(r_i) \right| \leq \frac{c}{2}
\]
for all $\gamma \in \Gamma_0$, $i \in \{1,\dots,n\}$, $f \in F$.
Then for any $x \in R$, we can lift it to some $y = \sum_{i_1,\dots,i_n=0}^\infty y_{i_1,\dots,i_n} T_1^{i_1} \cdots T_n^{i_n} \in F\{T_1,\dots,T_n\}$
with $|y| \leq 2|x|$, and then observe that 
\begin{align*}
\left| (\gamma-1)(x) \right| &\leq \max\{\left| (\gamma-1)(y_{i_1,\dots,i_n} r_1^{i_1}\cdots r_n^{i_n}) \right|: i_1,\dots,i_n \geq 0 \} \\
&\leq \frac{c}{2} \max\{\left| y_{i_1,\dots,i_n} \right|: i_1,\dots,i_n \geq 0 \} \qquad \mbox{(by \eqref{eq:Leibniz rule})} \\
&= (c/2)\left|y\right| \leq c\left|x\right|.
\end{align*}
It follows that the action of $\Gamma$ on $R$ is analytic.

Since $R$ is noetherian, $M$ may be viewed as a finite Banach module over $R$
by \cite[Proposition~3.7.3/3, Proposition~6.1.1/3]{bgr}.
By choosing topological generators for $M$ as an $R$-module, we may repeat the argument of the previous paragraph to deduce that the action of $\Gamma$ on $M$ is analytic.
\end{proof}

\begin{example} \label{exa:cyclotomic1}
The action of $\Gamma = \Zp^\times$ on $F = \Fp((\overline{\pi}))$
via the substitution $\pi \mapsto (1+\pi)^{\gamma}-1$ is analytic.
By contrast, the induced action on the completion of the perfect closure of $F$ is continuous but not analytic.

Now take $R = F$ and $M = \overline{\varphi}^{-1}(R)/R$. 
By Proposition~\ref{P:affinoid action analytic},
the action of $\Gamma$ on $M$ is analytic.

Put $\gamma = 1 + p^2 \in \Gamma$; this element generates the pro-$p$ procyclic subgroup $H = 1 + p^2 \ZZ_p$ of $\Gamma$. As an $H$-module,
$M$ splits as a direct sum $\bigoplus_{j=1}^{p-1} (1 + \overline{\pi})^{j/p} F$. 
Choose $j \in \{1,\dots,p-1\}$ and put $\overline{y} = (1+\overline{\pi})^{j/p}$.
We have
\[
(\gamma-1)(\overline{\pi}) = (\gamma-1)(1 + \overline{\pi}) = 
((1 + \overline{\pi})^{p^2} -1)(1 + \overline{\pi}).
\]
Thus on one hand,
\[
\left|(\gamma-1)(\overline{x}) \right|
\leq \left| \overline{\pi} \right|^{p^{2}}
\left| \overline{x} \right|
\qquad (\overline{x} \in F);
\]
on the other hand,
\[
\left|(\gamma-1)(\overline{y}) \right|
= \left| \overline{\pi} \right|^{p} \left| \overline{y}\right|,
\]
and by \eqref{eq:Leibniz rule}, we see that for all $\overline{z} \in \overline{y} F$ we have
\[
\left| (\gamma-1)(\overline{z}) \right|
= \left| \overline{\pi} \right|^{p} \left| \overline{z} \right|.
\]
In particular, $\gamma-1$ is bijective on $\overline{y} F$ for each $j$,
so $H^i_{\cont}(H, M) = 0$ for all $i \geq 0$. In this example, $H$ is normal in $\Gamma$,
so we may invoke Lemma~\ref{L:hochschild-serre} to deduce that $H^i_{\cont}(\Gamma,M) = 0$ for all $i \geq 0$.
This calculation plays an essential role in the proof of the Cherbonnier-Colmez theorem
described in \cite{kedlaya-new-phigamma}.
\end{example}

This example generalizes as follows.
\begin{example} \label{exa:cyclotomic2}
Put $F = \FF_p((\overline{\pi}))$ and $R = F\{\overline{t}_1,\dots,\overline{t}_d\}$ for some $d \geq 0$.
The ring $R$ admits a continuous action of $\Gamma = \ZZ_p^\times \rhd \ZZ_p^d$
in which $\gamma \in \ZZ_p^\times$ acts as in Example~\ref{exa:cyclotomic1}
fixing $\ZZ_p^d$, while for $j=1,\dots,d$ an element $\gamma_j$ in the $j$-th copy of $\ZZ_p$ sends $\overline{t}_j$ to $(1 + \overline{\pi})^{\gamma_j} \overline{t}_j$ and fixes $\overline{\pi}$
and $\overline{t}_k$ for $k \neq j$.
Put
$M = \overline{\varphi}^{-1}(R)/R$.
By Proposition~\ref{P:affinoid action analytic}, the actions of $\Gamma$ on $F, R, M$ are analytic.

Put $\Gamma_0 = (1 + p^2 \ZZ_p) \rhd p\ZZ_p^d$. We then have a decomposition
\begin{equation} \label{eq:ab-decomposition}
M \cong 
\bigoplus
(1 + \overline{\pi})^{e_0/p} \overline{t}_1^{e_1/p} \cdots \overline{t}_d^{e_d/p}
R
\end{equation}
of $R$-modules and $\Gamma_0$-modules, in which $(e_0,\dots,e_d)$ runs over
$\{0,\dots,p-1\}^{d+1} \setminus \{(0,\dots,0)\}$.

Choose a tuple $(e_0,\dots,e_d) \neq (0,\dots,0)$
and put $\overline{y} = (1 + \overline{\pi})^{e_0/p} \overline{t}_1^{e_1/p} \cdots \overline{t}_d^{e_d/p}$. Suppose first that $e_j \neq 0$ for some $j>0$.
Let $\gamma$ be the canonical generator of the $j$-th copy of $p \ZZ_p^d$.
Then
\[
\left|(\gamma-1)(\overline{y}) \right| = \left| \overline{\pi} \right| \overline{y};
\]
on the other hand,
\[
\left| (\gamma-1)(\overline{x}) \right|
\leq \left| \overline{\pi} \right|^p \left| \overline{x} \right| \qquad (\overline{x} \in R),
\]
so using \eqref{eq:Leibniz rule} again we see that $\gamma-1$ acts invertibly on $\overline{y} R$. By  Lemma~\ref{L:hochschild-serre}
we have $H^i_{\cont}(\Gamma_0, \overline{y} R) = 0$ for all $i \geq 0$.

Suppose next that $e_0 \neq 0$ but $e_1 = \cdots = e_d = 0$. 
Put $\gamma = 1 +p^2 \in \ZZ_p^\times$.
As in Example~\ref{exa:cyclotomic1}, we see that $\gamma-1$ acts invertibly on $\overline{y} R$. Since $\Zp^\times$ is not normal in $\Gamma$, we must now apply
Theorem~\ref{T:kill analytic cohomology} instead of Lemma~\ref{L:hochschild-serre}
to deduce that $H^i_{\cont}(\Gamma_0, \overline{y} R) = 0$ for all $i \geq 0$.

Putting everything together, we deduce that $H^i_{\cont}(\Gamma_0, M) = 0$ for all $i \geq 0$.
By Lemma~\ref{L:hochschild-serre} once more, we see that
$H^i_{\cont}(\Gamma, M) = 0$ for all $i \geq 0$.
This calculation plays an essential role in a generalization of the Cherbonnier-Colmez theorem described in \cite{kedlaya-liu2}.
\end{example}

\begin{remark}
Another class of examples to be considered in \cite{kedlaya-liu2}, based on Lubin-Tate towers, yields cases in which
$\Gamma = \mathrm{GL}_d(\ZZ_p)$ and the vanishing of cohomology can again be checked using
Theorem~\ref{T:kill analytic cohomology}.
\end{remark}

\section*{Acknowledgments}
Thanks to Niko Naumann and Jean-Pierre Serre for providing Example~\ref{exa:no HS1} and Example~\ref{exa:no HS}, respectively, and to Serre for additional feedback.
Kedlaya was supported by NSF grant DMS-1101343 and UC San Diego
(Stefan E. Warschawski Professorship),
and additionally by NSF grant DMS-0932078 while in residence at MSRI during fall 2014.

\end{document}